\definecolor{gris245}{RGB}{245,245,245}
\definecolor{olive}{RGB}{50,140,50}
\definecolor{brun}{RGB}{175,100,80}
\newtheorem{defn}{Definition}
\newtheorem*{defn*}{Definition}
\newtheorem{theorem}{Theorem}
\newtheorem*{theorem*}{Theorem}
\newtheorem{cor}{Corollary}
\newtheorem*{cor*}{Corollary}
\newtheorem*{prop*}{Proposition}
\newtheorem{lemma}{Lemma}
\newtheorem*{lemma*}{Lemma}
\newtheorem{remark}{Remark}
\newtheorem*{remark*}{Remark}
\newtheorem*{property*}{Property}
\newtheorem*{problem*}{Problem}
\theoremstyle{remark}
\newtheorem*{sketch*}{Sketch of Proof}
\title{A note concerning fundamental functions of interpolation associated to the inverse multiquadric $(\alpha^2+x^2)^{-k}$}
\author{Jeff Ledford, Kyle Rutherford}
\date{}
\begin{document}
\maketitle 

\section{Introduction}

This short note develops fundamental functions associated with the scattered shifts of the inverse \emph{multiquadric} function $(\alpha^2 + x^2)^{-k}$, for $k\in\mathbb{N}$.  Specifically, we solve the following interpolation problem.  
\begin{problem*}
    Let $k\in\mathbb{N}$ and suppose that $X=(x_j:j\in\mathbb{Z})$ is a complete interpolating sequence and $y=(y_j:j\in\mathbb{Z})\in\ell^2$.  For each $m\in\mathbb{Z}$, find coefficients $a=(a_j:j\in\mathbb{Z})\in\ell^2$ such that
    \[
    L_m(x)=\sum_{j\in\mathbb{Z}} a_{j}\left(\alpha^2+(x-x_j)^2\right)^{-k}
    \]
    satisfies
    \[
    L_m(x_k)=\begin{cases}
        1 & k=m;\\
        0 & k\neq m.
    \end{cases}
    \]
\end{problem*}

This problem has been investigated before, mainly in the \emph{cardinal} case, where $X=\mathbb{Z}$.  One may consult \cite{HL_2016, HL_2018, Ledford_2015} for various properties corresponding to the cardinal interpolation problem.  We also find this example as a `regular interpolator' in \cite{Ledford_2013}, which solves the above problem using Fourier analysis.  Notably, \cite{HL_2016} show that the fundamental function for cardinal interpolation obeys a $O(|x|^{-2k+2})$ decay rate.  

However, we follow the template set out in \cite{SS_2009}, which develops fundamental functions for scattered shifts of Gaussians.  In particular, decay properties are derived without Fourier analysis. Some of our results may be found in a more general form in \cite{Jaffard_1990}.  We include limited versions with different proofs.  These techniques allow us to improve the decay rate of fundamental function to $O(|x|^{-2k})$.

The remainder of the paper is laid out as follows.  Section 2 contains various definitions and basic facts necessary for the sequel, while Section 3 contains the main results.


\section{Definitions and Basic Facts}

We use the symbols $\mathbb{R},\mathbb{Z},$ and $\mathbb{N}$ to denote the set of real numbers, integers, and natural numbers, respectively.  In the sequel, we assume the reader is familiar with calculus concepts; with this in mind, we make special note of the following facts.

\section{Results}

\noindent Throughout this section, we fix $\alpha>0$, $k\in\mathbb{N}$, and a CIS $X=(x_j:j\in\mathbb{Z})$.
Our interpolation problem hinges on the the analysis of the (bi-infinite) matrix operator $A:\ell^2(\mathbb{Z})\to\ell^2(\mathbb{Z})$ whose entries are given by 
\[
A(i,j) = (\alpha^2+(x_i-x_j)^2)^{-k}.
\]
The invertibility of this operator follows from Lemma 2 in \cite{Ledford_2013}.  This allows us to follow \cite{SS_2009}, Section 6 to develop fundamental functions.  The exponential decay of the Gaussian aids in their computations, a feature which we will be unable to exploit.  We begin by establishing a growth rate for the entries of the inverse.

\begin{theorem}
    Let $A=[A(i,j)]$ be a bi-infinite matrix operator on $\ell^2$ that is self-adjoint, positive, and invertible.  Suppose that there exist constants $C>0$ and $k>1$ such that ${|A(i,j)|\leq C |i-j|^{-k}}$ for all $i,j\in\mathbb{Z}$.  Then there exist constants $\tilde{C}>0$ and $\tilde{k}>1$ such that ${|A^{-1}(m,n)|\leq \tilde{C}|m-n|^{-\tilde{k}}}$ for all $m,n\in\mathbb{Z}$ .
\end{theorem}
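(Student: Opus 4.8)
The plan is to exploit the fact that $A$ being self-adjoint, positive, and invertible means its spectrum lies in an interval $[a,b]$ with $0<a\leq b<\infty$, so that $A^{-1}$ can be written as a convergent power series (or, more robustly, a polynomial approximation) in $A$. Concretely, after rescaling we may assume $\|I-\delta A\|<1$ for a suitable $\delta>0$, and then $A^{-1}=\delta\sum_{n\geq 0}(I-\delta A)^n$, the Neumann series, converging in operator norm at a geometric rate $\rho^n$ with $\rho<1$. The point of this representation is that each power $(I-\delta A)^n$ is built from products of matrices with polynomial off-diagonal decay, and I want to track how that decay propagates through products and sums.

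The key technical step is a closure-under-multiplication estimate: if $B$ and $B'$ are bi-infinite matrices with $|B(i,j)|\leq C|i-j|^{-k}$ and $|B'(i,j)|\leq C'|i-j|^{-k}$ for some fixed $k>1$ (with the convention that the diagonal is controlled separately, or by bounding $|i-j|^{-k}$ by $1$ when $i=j$), then their product satisfies $|(BB')(i,j)|\leq C''|i-j|^{-k}$ with $C''$ depending only on $C$, $C'$, and $k$. This is the standard fact that the Jaffard class (matrices with polynomial off-diagonal decay of a fixed exponent $k>1$) forms an algebra; the proof splits the convolution-type sum $\sum_u |i-u|^{-k}|u-j|^{-k}$ at the midpoint $u$ between $i$ and $j$, noting that on each side one factor is bounded below in the relevant variable by roughly $|i-j|/2$, leaving the other factor summable. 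I would state this as an internal lemma and prove it by this splitting argument.

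With the algebra property in hand, iterating gives $|(I-\delta A)^n(i,j)|\leq C_0 M^{n}|i-j|^{-k}$ for constants $C_0, M$ depending only on $C,k,\delta$ — but this alone is useless, since $M^n$ blows up and would swamp the geometric decay $\rho^n$ of the norm. The genuine obstacle, and the heart of the argument, is to get a bound on $(I-\delta A)^n$ that combines the operator-norm decay $\rho^n$ with the spatial decay $|i-j|^{-k}$ \emph{without} an exponentially growing prefactor. The trick (this is exactly the mechanism in Jaffard's theorem) is to interpolate: write $(I-\delta A)^n = (I-\delta A)^{n/2}(I-\delta A)^{n/2}$ and use the elementary bound $|(BB')(i,j)| \leq \|B\|_{op}\,\|B'\|_{op}$-type control on the near-diagonal part together with the algebra bound on the far part. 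More precisely, for a fixed target distance $|m-n|=d$, one chooses the split point of the Neumann series at $n_0 \sim \log d$: for $n\leq n_0$ one uses the algebra bound (the factor $M^n\leq M^{n_0}$ is only polynomial in $d$, costing us a few powers of $d$ and forcing $\tilde k$ to be somewhat smaller than $k$), while for $n>n_0$ one uses $|(I-\delta A)^n(i,j)|\leq \|(I-\delta A)^n\|_{op}\leq \rho^n$, and $\sum_{n>n_0}\rho^n \sim \rho^{n_0} = d^{-c}$ for the appropriate $c>0$ depending on $\rho$. Balancing the two contributions yields $|A^{-1}(m,n)|\leq \tilde C|m-n|^{-\tilde k}$ for some $\tilde k>1$ (in general smaller than $k$, which is why the theorem only claims the existence of \emph{some} exponent exceeding $1$). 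The bookkeeping in this balancing — choosing $n_0$ as a function of $d$ and verifying that $\tilde k$ can indeed be taken greater than $1$, which requires $k>1$ strictly and uses that we have freedom to degrade the exponent — is the step I expect to be the most delicate, and it is where I would spend the most care; everything else is routine once the algebra lemma and the Neumann series setup are fixed.
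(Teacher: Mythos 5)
Your setup (rescale so that $B:=I-\delta A$ has $\|B\|_{op}=\rho<1$, expand $A^{-1}$ in the Neumann series, and use that matrices with off-diagonal decay $|i-j|^{-k}$, $k>1$, form an algebra) is the same skeleton the paper uses, but the step where you extract the final exponent has a genuine gap. Your concrete plan bounds $|B^n(i,j)|$ by $\min\bigl(\rho^n,\,C_0M^n|i-j|^{-k}\bigr)$ and splits the series at $n_0\sim\beta\log d$, $d=|i-j|$. The near part then contributes $\lesssim d^{\beta\log M-k}$ and the tail $\lesssim d^{-\beta\log(1/\rho)}$; optimizing over $\beta$ gives the exponent $\tilde k=k\log(1/\rho)/\bigl(\log M+\log(1/\rho)\bigr)$. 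This exceeds $1$ only when $(k-1)\log(1/\rho)>\log M$, a relation between the contraction factor $\rho$ and the algebra constant $M$ that you do not control and that can easily fail (take $k$ near $1$, $\rho$ near $1$, $M$ moderate and you get $\tilde k\ll 1$, not even summable decay). The ``freedom to degrade the exponent'' does not rescue this: replacing $k$ by a smaller $k'$ only makes the algebra constant worse, since it blows up as $k'\to1^+$. So the balancing argument, as described, proves only \emph{some} polynomial decay, not decay with exponent greater than $1$.

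The missing ingredient is exactly the device you name in passing but never execute: one needs an entrywise bound on $B^n$ in which the prefactor of $d^{-k}$ does not grow geometrically faster than $\rho^{-n}$ --- e.g.\ Jaffard's decomposition of $B$ into a banded part (whose $n$-th power has bandwidth $O(n)$ and is controlled purely by the operator norm) plus a remainder of small $\ell^2\to\ell^2$ norm, or the equivalent spectral-radius bootstrap based on an inequality of the type $\|B^{2n}\|_{\mathcal J}\leq C\|B^n\|_{\mathcal J}\|B^n\|_{op}$. The paper sidesteps the difficulty by a route you could not have guessed from the general statement of the theorem: its Lemma shows $|R^n(s,t)|\leq C^nC_{\alpha,k}^{n-1}(\alpha^2+(s-t)^2)^{-k}$ and then chooses $\alpha$ so large that the geometric ratio is below $1$, so the entrywise bounds on all powers sum with \emph{no} loss of exponent and only the operator-norm tail $r^N\|A^{-1}\|$ needs a limiting argument in $N$. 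In other words, the paper arranges $M<1$ using the specific $\alpha$-dependence of its matrix, which is precisely the degenerate situation in which your balancing is unnecessary; outside that situation your argument needs the Jaffard mechanism spelled out in full.
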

In order to prove this we will use Lemma 6.2 in \cite{SS_2009} together with the following result.
\begin{lemma}
Suppose that $(R(s,t))_s,_t\in\mathbb{Z}$ is a bi-infinite matrix satisfying the following condition: there exist positive constants $C$ and $k\geq 1$ such that $|R(s,t)|\leq {C(\alpha^2+(s-t)^2)^{-k}}$ for every pair of integers $s$ and $t$. There is a constant $C_{\alpha,k}>0$, such that $|R^n(s,t)| \leq {C^n C_{\alpha,k}^{n-1}(\alpha^2+(s-t)^2)^{-k}}$ for every $s, t \in\mathbb{Z}$.
\end{lemma}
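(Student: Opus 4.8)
The plan is to induct on $n$. The case $n=1$ is exactly the hypothesis. For the inductive step I would write $R^{n+1}(s,t)=\sum_{u\in\mathbb{Z}}R^n(s,u)R(u,t)$, bound $R^n(s,u)$ by the inductive hypothesis and $R(u,t)$ by the standing hypothesis, and thereby reduce everything to a single \emph{approximate convolution} estimate for the weight $w(j):=(\alpha^2+j^2)^{-k}$: namely, that there is a constant $C_{\alpha,k}$, depending only on $\alpha$ and $k$, with
\[
\sum_{u\in\mathbb{Z}} w(u)\,w(m-u)\ \le\ C_{\alpha,k}\,w(m)\qquad\text{for every }m\in\mathbb{Z}.
\]
Granting this inequality, the induction closes at once: since $\sum_{u}w(s-u)w(u-t)=(w*w)(s-t)\le C_{\alpha,k}\,w(s-t)$, the bound $|R^n(s,t)|\le C^nC_{\alpha,k}^{n-1}w(s-t)$ yields $|R^{n+1}(s,t)|\le C^{n+1}C_{\alpha,k}^{n-1}\sum_u w(s-u)w(u-t)\le C^{n+1}C_{\alpha,k}^{n}w(s-t)$, which is the claim for $n+1$. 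I would also note that the same estimate with $n=1$ shows each defining series converges absolutely, so the powers $R^n$ are well defined entrywise.

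The heart of the argument is thus the displayed inequality, and this is where the real work lies. I would prove it by splitting the index of summation according to which ``leg'' is long: for any $u$ we have $|u|+|m-u|\ge|m|$, so at least one of $|u|\ge|m|/2$ or $|m-u|\ge|m|/2$ holds. On the set where $|u|\ge|m|/2$ one gets $\alpha^2+u^2\ge\alpha^2+m^2/4\ge\tfrac14(\alpha^2+m^2)$, hence $w(u)\le 4^k w(m)$, and therefore
\[
\sum_{|u|\ge|m|/2} w(u)\,w(m-u)\ \le\ 4^k w(m)\sum_{u\in\mathbb{Z}}w(m-u)\ =\ 4^k S_{\alpha,k}\,w(m),
\]
where $S_{\alpha,k}:=\sum_{v\in\mathbb{Z}}(\alpha^2+v^2)^{-k}<\infty$ because $2k\ge 2>1$. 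The complementary set $|m-u|\ge|m|/2$ is handled identically after the substitution $v=m-u$, and adding the two bounds gives the inequality with $C_{\alpha,k}=2\cdot4^k S_{\alpha,k}$.

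The only genuine obstacle I anticipate is bookkeeping rather than ideas: one must check that the comparison $\alpha^2+u^2\ge c(\alpha^2+m^2)$ survives the $\alpha^2$ term (it does, with $c=1/4$, precisely because $\alpha^2/4\le\alpha^2$), and that the geometric factor $C_{\alpha,k}^{n-1}$ is tracked correctly through the recursion. I would also remark that the hypothesis $k\ge1$ is used only to guarantee $S_{\alpha,k}<\infty$; any $k>1/2$ would suffice, but $k\ge1$ is all that is required for the applications in this note.
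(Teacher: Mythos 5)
Your proof is correct, and it reaches the key convolution inequality $\sum_u w(s-u)w(u-t)\le C_{\alpha,k}\,w(s-t)$ by a genuinely different decomposition than the paper's. The paper (taking $s<t$ and $M=t-s$) splits the sum according to the position of $u$ relative to the interval $[s,t]$: for $u<s$ or $u>t$ one factor is already bounded by $w(M)$ and the other is summed to a convergent series, while for $s\le u\le t$ the paper invokes an explicit partial-fractions identity writing $\bigl[(\alpha^2+j^2)(\alpha^2+(M-j)^2)\bigr]^{-1}$ as $(4\alpha^2+M^2)^{-1}$ times a sum of two single-pole terms, which after raising to the power $p$ produces the $(\alpha^2+M^2)^{-p}$ decay. (The paper actually proves the slightly more general mixed-exponent estimate with $(\alpha^2+(t-u)^2)^{-p}$, $p\in(1/2,k]$, and specializes to $p=k$ at the end; you only need the diagonal case, which is all the lemma requires.) Your split according to which leg satisfies $|u|\ge|m|/2$ or $|m-u|\ge|m|/2$ replaces the partial-fractions computation with the one-line comparison $\alpha^2+u^2\ge\tfrac14(\alpha^2+m^2)$, handles all of $\mathbb{Z}$ in two symmetric pieces rather than three, and yields the fully explicit constant $C_{\alpha,k}=2\cdot 4^kS_{\alpha,k}$; the overlap of your two index sets is harmless since the summands are nonnegative. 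The inductive step and the absolute convergence remark are exactly as in the paper. Your closing observation that only $k>1/2$ is needed for $S_{\alpha,k}<\infty$ is also consistent with the paper's restriction $p>1/2$.
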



  \begin{proof}
      
  We begin by letting $p\in(1/2,k]$ and suppose that $s \neq t \in\mathbb{Z}$, and assume without loss that $s < t$.  We have
  \begin{align*}
      &\sum_{u=-\infty}^\infty(\alpha^2+(s-u)^2)^{-k}(\alpha^2+(t-u)^2)^{-p} \\
      =& \sum_{u=s}^t(\alpha^2+(s-u)^2)^{-k}(\alpha^2+(t-u)^2)^{-p} \\
      &\quad + \sum_{u=-\infty}^{s-1}(\alpha^2+(s-u)^2)^{-k}(\alpha^2+(t-u)^2)^{-p} \\
      & \qquad+  \sum_{u=t+1}^\infty(\alpha^2+(s-u)^2)^{-k}(\alpha^2+(t-u)^2)^{-p} \\
      =:& \sum_1 + \sum_2 + \sum_3.
  \end{align*}
  
  Now, letting $j = u-s $, we have
 \begin{align}
     \nonumber\sum_1  =& 
     \sum_{j=0}^{t-s}(\alpha^2+j^2)^{-k}(\alpha^2+(t-(s+j))^2)^{-p} \\
  \label{sum1}   =& \sum_{j=0}^{t-s}(\alpha^2+j^2)^{-k}(\alpha^2+(t-s-j)^2)^{-p} 
 \end{align}

Setting $M=t-s$, reindexing, then applying partial fractions yields
\begin{align*}
    \sum_{1} & = \sum_{j=0}^M(\alpha^2+j^2)^{-k}(\alpha^2+(M-j)^2)^{-p} \\
    & \leq \alpha^{-2(k-p)}\sum_{j=0}^{M} \left[\frac{1}{(\alpha^2+j^2)(\alpha^2+(M-j)^2}\right]^p \\
    & =\alpha^{-2(k-p)}\sum_{j=0}^{M}(4\alpha^2+M^2)^{-p}\left[ \frac{(\frac{2}{M})j+1}{\alpha^2+j^2}+\frac{3-(\frac{2}{M})j}{\alpha^2+(M-j)^2} \right]^{p}\\
    & \leq C_{\alpha,p}(\alpha^2+M^2)^{-p}.
\end{align*}

The bounds for $\sum_2$ and $\sum_3$ work similarly, reindexing, we have
 \begin{align*}
     \sum_2 
     &= \sum_{j=1}^\infty(\alpha^2+j^2)^{-k}(\alpha^2+(t-s+j)^2)^{-p}\\
     &\leq (\alpha^2+(t-s)^2)^{-p}\sum_{j=1}^{\infty}(\alpha^2+j^2)^{-k} \\
     &=C_{\alpha,k}(\alpha^2+(t-s)^2)^{-p}
 \end{align*}
 and
 \begin{align*}
     \sum_3 
     &=\sum_{j=1}^\infty(\alpha^2+(s-j-t)^2)^{-k}(\alpha^2+j^2)^{-p} \\
     &\leq (\alpha^2+(t-s)^2)^{-k}\sum_{j=1}^{\infty}(\alpha^2+j^2)^{-p} \\
     &=C_{\alpha,p}(\alpha^2+(t-s)^2)^{-k}
 \end{align*}
 
 If $s=t$, since $p\leq k$, we have
 \[
     \sum_{u=-\infty}^{\infty}(\alpha^2+(s-u)^2)^{-k-p}= \sum_{u=-\infty}^{\infty} (\alpha^2+j^2)^{-k-p}
     \leq C_{\alpha,k}.
 \]
  Setting $p=k$ and combining the estimates above, we conclude that
 \begin{align*}
     |R^2(s,t)|
     &= \sum_{u=-\gamma}^{\infty} (\alpha^2+(s-u)^2)^{-k} (\alpha^2+(t-u)^2)^{-k} \\
     &\leq C_{\alpha, k}(\alpha^2+(t-s)^2)^{-k}.
 \end{align*}
 The theorem follows by induction.
 
 \end{proof}

 \begin{proof}[Proof of Theorem 1]
    We may expand $A^{-1}$ via its Neumann series.
    \begin{align*}
        A^{-1} &= \Vert A \Vert^{-1} \sum_{j=0}^{N-1} R^j + \Vert A \Vert^{-1} R^N \sum_{j=0}^\infty R^j \\
        &= \Vert A \Vert^{-1} \sum_{j=0}^{N-1} R^j + R^N A^{-1}.
        \end{align*}
Thus $|A^{-1}(s,t)| \leq \Vert A \Vert^{-1} \sum_{j=0}^{N-1} R^j(s,t) + r^N \Vert A^{-1} \Vert $, where $r:=\| R\|\in(0,1)$ by \cite[Lemma 6.2]{SS_2009}.
Now we choose $\alpha$ so large that $C_{\alpha,k}$ from \cite[Lemma  6.3]{SS_2009} satisfies $C_{\alpha, k} < 1$. This yields
\[
\frac{D^N -1}{D-1}\leq \frac{2}{1-D}.
\]
so that
\begin{align*} 
|A^{-1}(s,t)| &\leq 
         \Vert A \Vert^{-1} \frac{D^N-1}{D-1}(\alpha^2+(s-t)^2)^{-k} + r^N \Vert A^{-1} \Vert \\
        &\leq \frac{2\Vert A \Vert^{-1}}{1 - D}(\alpha^2+(s-t)^2)^{-k} + r^N \Vert A^{-1} \Vert
   \end{align*}
  Since the above inequality holds for all $N\in\mathbb{N}$ and for any $(s,t)\in\mathbb{Z}^2$, we may find $N\in\mathbb{N}$ so large that $r^N<\|A^{-1} \|^{-1}(\alpha^2+(s-t)^2)^{-k}$ for all $n\geq N$.  Hence for any $(s,t)\in\mathbb{Z}^2$, we have
  \[
  |A^{-1}(s,t)|\leq \left(\dfrac{2\| A^{-1}\|}{1-D}+1  \right)(\alpha^2+(s-t)^2)^{-k}.
  \]
  \end{proof}

 \begin{remark}
 The foregoing result implies, in particular, that $A^{-1}$ is a bounded operator on every $\ell^p(\mathbb{Z}), 1 \leq p \leq \infty$.
 \end{remark}

 \begin{theorem}
 Fix $k\in\mathbb{N}$ and let $X=(x_j : j \in\mathbb{Z})$ be a CIS. Let A = $A_\alpha$ be the bi-infinite matrix whose entries are given by $A(i,j) = (\alpha^2 + (x_i - x_j)^2)^{-k}$, j,k $\in\mathbb{Z}$. Given  $m\in\mathbb{Z}$, let the $m$-th fundamental function be defined as follows:
    $L_{\alpha, m, k}(x) := L_m(x) := \sum_{j\in\mathbb{Z}} A^{-1} (j,m)(\alpha^2 + (x - x_j)^2)^{-k}$.
    For large enough $\alpha$ the following holds:
    \begin{enumerate}
        \item[(i)] The function $L_m$ is continuous throughout $\mathbb{R}$.
        \item[(ii)] Each $L_m$ obeys fundamental interpolatory conditions \\ $L_m (x_j) = \begin{cases}
            1 & j=m\\
            0 & j\neq m
        \end{cases}.$
        \item[(iii)] There exists $p, C > 0$, which depend on $X$ and $k$, such that $|L_m(x)| \leq C(\alpha^2 + |x - x_m|^2)^{-k}$ for every $x \in\mathbb{R}$ and every $m\in\mathbb{Z}$.
        \item[(iv)] If $(b_m)$ satisfies $|b_m| \leq C|m|^{2k-2}$, then the function $x\mapsto \sum_{m} b_mL_m(x)$ is continuous on $\mathbb{R}$.
    \end{enumerate}
    \end{theorem}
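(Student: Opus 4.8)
The plan is to establish (i), (ii), (iii), (iv) in that order, with (iii) as the crux: once its pointwise bound is in hand, parts (i), (ii) and (iv) follow from it (or directly from Theorem 1) together with routine summability estimates. The one quantitative input I would rely on is a node‑adapted form of the decay of $A^{-1}$. Theorem 1 applies to $A$: it is self-adjoint, positive definite (the kernel is a strictly positive definite function), invertible by Lemma 2 of \cite{Ledford_2013}, and satisfies $|A(i,j)|\le a^{-2k}|i-j|^{-2k}$ thanks to the separation $x_{j+1}-x_j\ge a>0$ of a CIS. Its proof actually yields $|A^{-1}(s,t)|\le C\,(\alpha^2+(s-t)^2)^{-k}$ for integer indices; since for a CIS one also has $x_{j+1}-x_j\le b$, the quantities $\alpha^2+(j-m)^2$ and $\alpha^2+(x_j-x_m)^2$ agree up to multiplicative constants, so this upgrades to
\[
|A^{-1}(j,m)|\le C_0\,(\alpha^2+(x_j-x_m)^2)^{-k},\qquad j,m\in\mathbb Z,
\]
with $C_0=C_0(X,k,\alpha)$. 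This is the form I would use below.

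For (i): each summand $x\mapsto A^{-1}(j,m)\,(\alpha^2+(x-x_j)^2)^{-k}$ is continuous and dominated by $\alpha^{-2k}|A^{-1}(j,m)|$, and $\sum_j|A^{-1}(j,m)|<\infty$ by Theorem 1; the Weierstrass M-test then gives uniform convergence on all of $\mathbb R$, so $L_m$ is continuous. For (ii): evaluating at $x=x_i$ turns the defining series into $\sum_j A^{-1}(j,m)A(i,j)$, and since the $i$-th row of $A$ and the $m$-th column of $A^{-1}$ both lie in $\ell^2$, self-adjointness of $A$ gives $\sum_j A^{-1}(j,m)A(i,j)=\langle A^{-1}e_m,Ae_i\rangle=\langle AA^{-1}e_m,e_i\rangle=\langle e_m,e_i\rangle=\delta_{im}$, which is precisely the interpolatory condition.

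For (iii) — the heart of the matter — I would combine the displayed bound on $A^{-1}$ with a self-convolution estimate for $\phi(t):=(\alpha^2+t^2)^{-k}$ sampled along the nodes. From that bound, $|L_m(x)|\le C_0\sum_j\phi(x_j-x_m)\,\phi(x-x_j)$, and the target is
\[
\sum_{j\in\mathbb Z}\phi(x_j-x_m)\,\phi(x-x_j)\le C\,\phi(x-x_m)
\]
uniformly in $x\in\mathbb R$ and $m\in\mathbb Z$. The driving observation is the triangle inequality $|x-x_m|\le|x-x_j|+|x_j-x_m|$: one of the two factors always has argument at least $\tfrac12|x-x_m|$. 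So I split the index set into $\{j:|x-x_j|\ge\tfrac12|x-x_m|\}$ and $\{j:|x_j-x_m|\ge\tfrac12|x-x_m|\}$. On the first, $\phi(x-x_j)\le 4^k\phi(x-x_m)$ can be factored out, leaving $\sum_j\phi(x_j-x_m)\le\sum_{n\in\mathbb Z}\phi(an)<\infty$ (node separation; the series converges because $2k\ge2>1$). On the second, $\phi(x_j-x_m)\le 4^k\phi(x-x_m)$ factors out, leaving $\sum_j\phi(x-x_j)$, which is at most $2\sum_{n\ge0}\phi(an)<\infty$ uniformly in $x$ because $a$-separated nodes leave at most two points of $X$ at distance in $[na,(n+1)a)$ from any given point. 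This yields $|L_m(x)|\le C(\alpha^2+|x-x_m|^2)^{-k}$ with $C=C(X,k)$. I expect the only genuine work to be this splitting bookkeeping and, upstream, the passage from Theorem 1 to the node-adapted decay of $A^{-1}$; both are elementary but must be done carefully.

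For (iv): fix $R>0$. Because $|x_m-x_0|\ge a|m|$, there is $m_0=m_0(R)$ with $|x-x_m|\ge\tfrac a2|m|$ for all $x\in[-R,R]$ and all $|m|\ge m_0$; then (iii) gives
\[
|b_mL_m(x)|\le C|m|^{2k-2}\bigl(\alpha^2+\tfrac{a^2}{4}m^2\bigr)^{-k}\le C'|m|^{-2}.
\]
This is summable, so the M-test yields uniform convergence of $\sum_m b_mL_m$ on $[-R,R]$; the finitely many omitted terms are continuous, and $R$ is arbitrary, so $x\mapsto\sum_m b_mL_m(x)$ is continuous on $\mathbb R$. (The exponent $2k-2$ is the exact borderline the $O(|x|^{-2k})$ decay of (iii) can absorb, since $2k-2-2k=-2$.)
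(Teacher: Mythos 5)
Your proposal is correct, and for parts (i), (ii) and (iv) it follows essentially the same path as the paper (M-test for (i) and (iv), the identity $\sum_j A^{-1}(j,m)A(i,j)=\delta_{im}$ for (ii) --- your justification via $\ell^2$ inner products and self-adjointness is a careful version of the paper's formal computation, and your localization to $[-R,R]$ in (iv) is the right way to read the paper's uniform-looking but really only locally uniform bound). Where you genuinely diverge is (iii). The paper writes $|L_m(x)|\le C(\alpha^2+(x-x_m)^2)^{-k}\bigl(1+\sum_{j\neq m}|A^{-1}(j,m)|\,\rho_j^k\bigr)$ with $\rho_j=\frac{\alpha^2+(x-x_m)^2}{\alpha^2+(x-x_j)^2}$, and controls $\rho_j$ by restricting attention to the index $m=m_x$ of the cell containing $x$, so that $|x-x_m|\le b$ and the ratio is bounded by $(\alpha^2+b^2)/\alpha^2$; as written this only establishes the estimate for $x$ near $x_m$, not uniformly in $(x,m)$ as the statement demands. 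Your route instead proves the discrete self-convolution inequality $\sum_j\phi(x_j-x_m)\phi(x-x_j)\le C\phi(x-x_m)$ for $\phi(t)=(\alpha^2+t^2)^{-k}$ by the triangle-inequality splitting of the index set, after upgrading Theorem 1's conclusion to the node-adapted form $|A^{-1}(j,m)|\le C(\alpha^2+(x_j-x_m)^2)^{-k}$ using $a|j-m|\le|x_j-x_m|\le b|j-m|$. This is exactly the mechanism of the paper's Lemma (the bound on $R^2(s,t)$), redeployed at the level of the nodes, and it buys you the full uniform statement of (iii) for all $x\in\mathbb{R}$ and all $m\in\mathbb{Z}$; the paper's ratio argument is shorter but delivers less. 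Both arguments are elementary and rest on the same two inputs (the decay of $A^{-1}$ and the separation bounds for a CIS), so I would regard yours as a strengthened, self-contained variant rather than a fundamentally new idea.
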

    \begin{proof}
    The proof of $(i)$ is a consequence of the Weierstrass $M$-test and decay rate of Theorem $1$.

     For $(ii)$, letting $Id$ denote the identity matrix, we have
     \begin{align*}
            L_m(x_k) &= \sum_{j=-\infty}^\infty A^{-1}(j,m)(\alpha^2+(x_k-x_j)^2)^{-k} \\
            &= \sum_{j=-\infty}^{\infty} A^{-1}(j,m)A(k,j)\\
            &=Id(k,m)\\
            &=\delta_{m,k}.
        \end{align*}
        For $(iii)$, let $m:=m_x$ denote the integer $m$ such that $x\in(x_m,x_{m+1}]$.
        \begin{align*}
            |L_m(x)| &=\sum_{j\in\mathbb{Z}}A^{-1}(j,m)(\alpha^2+(x-x_j)^2)^{-k}\\
            &\leq C_{\alpha,k}(\alpha^2+|x-x_m|^2)^{-k}\left(1+\sum_{j\neq m}A^{-1}(j,m)\left( \dfrac{\alpha^2+(x-x_m)^2}{\alpha^2+(x-x_j)^2}\right)^k \right)\\
            &\leq C_{\alpha,k}(\alpha^2+(x-x_m)^2)^{-k}\left(1+C\sum_{j\neq m}|j-m|^{-2k}\left( \dfrac{\alpha^2+R^2}{\alpha^2+r^2}\right)^k \right)\\
            &\leq C_{\alpha,k,X}(\alpha^2+(x-x_m)^2)^{-k}
        \end{align*}
        Finally, $(iv)$ follows from the decay estimate in Theorem $1$ and the Weierstrass $M$-test.   Indeed, if $|a_j|\leq C|j|^{2k-2}$, then we have
        \begin{align*}
     \left|\sum_{j\in\mathbb{Z}}a_jL_j(x)\right| &\leq C\sum_{j\in\mathbb{Z}}|j|^{2k-2}|L_j(x)| \\
            &\leq C_{\alpha, k} \sum_{j\in\mathbb{Z}}|j|^{2k-2}(\alpha^2+|x-x_j|^2)^{-k} \\
            &\leq C_{\alpha, k,X}.
        \end{align*}
    \end{proof}

    \begin{theorem}
    Let $\alpha > 0$ and $k\in\mathbb{N}$ be fixed. Suppose that $(x_j : j \in \mathbb{Z})$ is a CIS. Let $A = A_{\alpha, k}$ be the bi-infinite matrix whose entries are given by $A(i,j) = (\alpha^2+(x_j-x_i)^2)^{-k}; i,j \in\mathbb{Z}$. Given $p \in [1,\infty]$, and $\Bar{y} := (y_l : l \in\mathbb{Z}) \in \ell^p(\mathbb{Z})$ define \\
    $I[\bar{y}]( x) := \sum_{i\in\mathbb{Z}}(A^{-1}\Bar{y})_i(\alpha^2+(x-x_i)^2)^{-k}, x \in\mathbb{R}$, \\
    where $(A^{-1}\Bar{y})_i$ denotes the $i$th component of the sequence $A^{-1}\Bar{y}$. The following hold:
    \begin{enumerate}
        \item[(i)] The function $ x \mapsto I[\Bar{y}]( x)$ is continuous on $\mathbb{R}$.
        \item[(ii)] If $x$ is any real number, then 
        $I[\Bar{y}](x) = \sum_{i\in\mathbb{Z}}y_iL_i(x)$, 
        where $(L_i : i \in\mathbb{Z})$ is the sequence of fundamental functions introduced in the preceding theorem.
        \item[(iii)] There is a constant $C$, depending on $\alpha, X,$ and $p$ such that 
        $\Vert I[\Bar{y}] \Vert_{L_p\mathbb{(R)}} \leq C \Vert \Bar{y} \Vert_{\ell^p}$, 
        for every $
        \Bar{y} \in \ell^p(\mathbb{Z})$.
    \end{enumerate}
    \end{theorem}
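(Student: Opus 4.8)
The plan is to obtain parts (i) and (ii) as essentially formal consequences of the decay estimates already established, and to reduce (iii) to two ingredients: the boundedness of $A^{-1}$ on $\ell^p(\mathbb{Z})$ (the Remark following Theorem~1) and an $L^p$ bound for the ``synthesis'' operator $S$ that sends a sequence $(c_i)$ to the function $x\mapsto\sum_{i}c_i\,\phi(x-x_i)$, where $\phi(t):=(\alpha^2+t^2)^{-k}$. The key quantitative fact used throughout is that, because $X$ is a CIS, its points are separated and of bounded density, so $K:=\sup_{x\in\mathbb{R}}\sum_{i\in\mathbb{Z}}(\alpha^2+(x-x_i)^2)^{-k}<\infty$ (the exponent $2k\ge 2$ exceeds $1$), and likewise $\int_{\mathbb{R}}(\alpha^2+(x-x_i)^2)^{-k}\,dx$ is a finite constant, call it $M$, independent of $i$.

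For (i), write $c:=A^{-1}\bar{y}$, which lies in $\ell^p\subseteq\ell^\infty$ by the Remark, so that $I[\bar{y}]=\sum_i c_i\,\phi(\cdot-x_i)$. On any bounded interval the series is dominated termwise by $\|c\|_{\ell^\infty}\sum_i(\alpha^2+(x-x_i)^2)^{-k}$, a majorant which converges uniformly on compacta; the Weierstrass $M$-test then yields continuity. For (ii), substitute $c_i=\sum_m A^{-1}(i,m)y_m$ and interchange the order of summation; this is justified because the estimate used in the proof of part (iii) of the preceding theorem --- retained now with absolute values, which is permitted by the bound of Theorem~1 --- gives $\sum_i|A^{-1}(i,m)|\,\phi(x-x_i)\le C_{\alpha,k,X}\,(\alpha^2+|x-x_m|^2)^{-k}$, whence $\sum_m|y_m|\sum_i|A^{-1}(i,m)|\,\phi(x-x_i)<\infty$ by the density argument above. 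The rearranged series is exactly $\sum_m y_m L_m(x)$.

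For (iii), it suffices to bound $S$ on $\ell^p$, since $I[\bar{y}]=S(A^{-1}\bar{y})$ and $A^{-1}$ is bounded on $\ell^p$. For $p=\infty$ one has $\|Sc\|_{L^\infty}\le K\,\|c\|_{\ell^\infty}$; for $p=1$ one has $\|Sc\|_{L^1}\le\sum_i|c_i|\,\|\phi(\cdot-x_i)\|_{L^1}=M\,\|c\|_{\ell^1}$; and for $1<p<\infty$ one may either interpolate by Riesz--Thorin between these endpoints, or argue directly by Hölder against the weights $\phi(x-x_i)$: writing $q$ for the conjugate exponent, $\bigl|\sum_i c_i\phi(x-x_i)\bigr|^p\le\bigl(\sum_i\phi(x-x_i)\bigr)^{p/q}\sum_i|c_i|^p\phi(x-x_i)\le K^{p/q}\sum_i|c_i|^p\phi(x-x_i)$, and integrating in $x$ gives $\|Sc\|_{L^p}^p\le K^{p/q}M\,\|c\|_{\ell^p}^p$. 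Composing with the $\ell^p$-bound for $A^{-1}$ yields $\|I[\bar{y}]\|_{L^p(\mathbb{R})}\le\|S\|_{\ell^p\to L^p}\,\|A^{-1}\|_{\ell^p\to\ell^p}\,\|\bar{y}\|_{\ell^p}$.

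The only place a genuine idea is required, as opposed to bookkeeping with the decay rates from Theorem~1 and the preceding theorem, is the $L^p$ bound for $S$ at intermediate exponents; I expect the Hölder-with-weights estimate (equivalently, Riesz--Thorin interpolation between the elementary endpoints) to be the cleanest route, and it depends only on $K<\infty$ and $M<\infty$. A minor point to keep in mind is that the $p=\infty$ case of (iii) should be read as an essential-supremum bound, which here is immediate since all our estimates hold pointwise everywhere.
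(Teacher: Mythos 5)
Your proposal is correct, and for parts (i) and (ii) it matches the paper's route (Weierstrass $M$-test for continuity; interchange of summation, which you justify more carefully via the absolute-convergence bound $\sum_i|A^{-1}(i,m)|\phi(x-x_i)\le C(\alpha^2+(x-x_m)^2)^{-k}$, a step the paper leaves implicit). The real divergence is in (iii). The paper first converts to $I[\bar{y}]=\sum_j y_jL_j$ via (ii) and then estimates the endpoints using the decay of the fundamental functions $L_j$ from Theorem~2(iii), integrating interval-by-interval over $[x_m,x_{m+1}]$ for $p=1$ and taking a supremum for $p=\infty$, before invoking Riesz--Thorin. You instead factor $I=S\circ A^{-1}$, where $S$ is the synthesis operator built from the raw shifts $\phi(\cdot-x_i)$, bound $S$ at the endpoints by the elementary constants $M=\|\phi\|_{L^1}$ and $K=\sup_x\sum_i\phi(x-x_i)$, and compose with the $\ell^p$-boundedness of $A^{-1}$ (the paper's Remark after Theorem~1, itself a Schur-test consequence of the decay estimate). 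Your factorization cleanly separates the two sources of boundedness and avoids routing (iii) through the fundamental functions; it also lets (i) be proved directly without the mild circularity in the paper, whose proof of (i) appeals to Theorem~2(iv) and hence implicitly to (ii). Your Hölder-with-weights argument, $|Sc(x)|^p\le K^{p/q}\sum_i|c_i|^p\phi(x-x_i)$ followed by integration, is a nice elementary substitute for Riesz--Thorin at intermediate exponents, at the cost of being specific to this weighted structure rather than a black-box interpolation. Both proofs ultimately rest on the same two inputs: the off-diagonal decay of $A^{-1}$ from Theorem~1 and the separation of the CIS.
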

    \begin{proof} 
        $(i)$ The decay bound in Theorem 1 shows that $A^{-1}$ is a bounded operator on $\ell^p(\mathbb{Z})$, hence the sequence $A^{-1}\Bar{y}$ is bounded. Hence the continuity of $I_\alpha[\Bar{y}]$ follows from Theorem 2 $(iv)$.

        $(ii)$ Let $x \in \mathbb{R}$. Then
        \begin{align*}
            I[\bar{y}]( x) =& \sum_{j\in\mathbb{Z}}(A^{-1}\bar{y})_j (\alpha^2 + (x-x_j)^2)^{-k} \\
            =& \sum_{i\in\mathbb{Z}}y_i\left[\sum_{j\in\mathbb{Z}}A^{-1}(j,i) (\alpha^2+(x-x_j)^2)^{-k}\right]\\
            =&\sum_{i\in\mathbb{Z}}y_i L_i(x).
        \end{align*}
        $(iii)$ In light of the Riesz-Thorin interpolation theorem, we need only show this result for $p=1$ and $p=\infty$.  We begin with $p=1$.  Noting $(ii)$, we have
        \begin{align*}
        \int_{\mathbb{R}} |I[\bar{y}](x)|{\rm d}x   =& \sum_{m\in\mathbb{Z}}\int_{x_m}^{x_{m+1}} \left|\sum_{j\in\mathbb{Z}}y_jL_j(x) \right|{\rm d}x \\
        \leq& C_{\alpha,k}R \| \bar{y}\|_{\ell^1} \sum_{m\in\mathbb{Z}}(\alpha^2+\min\{(x_m-x_j)^2,(x_{m+1}-x_j)^2 \})^{-k}\\
        \leq & C_{\alpha,k,X}\| \bar{y}\|_{\ell^1}.
        \end{align*}
        
When $p=\infty$, we have 
\begin{align*}
    \| I[\bar{y}] \|_{L_\infty}\leq &C_{\alpha,k} \| \bar{y} \|_{\ell^\infty}\sup_{x\in\mathbb{R}}\sum_{j\in\mathbb{Z}}(\alpha^2+(x-x_j)^2)^{-k}\\
    \leq & C_{\alpha,k} \| \bar{y} \|_{\ell^\infty}\sum_{j\in\mathbb{Z}}(\alpha^2+r^2(m-j)^2)^{-k}\\
    \leq & C_{\alpha,k,X}\| \bar{y} \|_{\ell^\infty}.
\end{align*}
Now the Riesz-Thorin interpolation theorem yields the result for $1<p<\infty$.
    \end{proof}

\begin{remark}
    Our example has a higher dimension analog, see \cite{Hamm_2017}. It would be interesting to see how these results translate to higher dimensions. The results in this paper give an improved decay estimate for the fundamental function given in \cite{HL_2016}, thus it may be possible to improve the estimate in higher dimension as well.
\end{remark}

\bibliographystyle{plain} 
\bibliography{main}

\noindent {\sc Department of Mathematics \& Computer Science, Longwood University, U.S.A.}

\noindent {\it E-mail address:} {\tt ledfordjp@longwood.edu}

\noindent {\it E-mail address:} {\tt kyle.rutherford@live.longwood.edu}

\end{document}